\theoremstyle{plain}
\newtheorem{thm}{Theorem}[section]
\newtheorem{coro}[thm]{Corollary}
\theoremstyle{definition}
\newtheorem{ex}[thm]{Example}
\newtheorem{rem}[thm]{Remark}
\newcommand{\N}{\mathbb{N}}
\newcommand{\R}{\mathbb{R}}
\def\E{\mathbb{E}}
\def\d{\, \mathrm{d}}
\def\1{\mathds{1}}
\def\R{\mathbb{R}}
\def\E{\mathbb{E}}
\def\N{\mathbb{N}}
\def\|{\, | \,}
\def\rd{\,\mathrm{d}}
\def\var{\mathrm{Var}}
\def\cov{\mathrm{Cov}}
\def\nn{\nonumber}
\def\eqas{\overset{a.s.}{=}}
\numberwithin{equation}{section}
\begin{document}

\title{The Quadratic Variation of Gauss-Markov Semimartingales}
\author{Georges Kassis  \\ \\
Department of Mathematics, University College London \\ London WC1E 6BT, United Kingdom}
\date{October 28, 2024}
\maketitle

\begin{abstract}
The covariance function of a Gauss-Markov process evaluated at points $(s,t)$ admits a representation as a product of a function of $\min(s,t)$ and a function of $\max(s,t)$. We call these functions the covariance factors of a Gauss-Markov process, and give the expression of the quadratic variation of a Gauss-Markov semimartingale in terms of its covariance factors.

\vspace{.25cm}

\end{abstract}

{\bf Keywords}: Gauss-Markov process, quadratic variation, covariance

\vspace{.25cm}

\section{Introduction}
Let $I$ be a real set of the form $[a,b)$ where $-\infty < a < b \leqslant \infty$. The covariance function $K_X$ of a Gauss-Markov process $(X_t)_{t\in I}$ satisfies the following relationship: 
\begin{equation} \label{relrel}
K_X(r,s)K_X(s,t)= K_X(s,s)K_X(r,t)
\end{equation}
for all $(r,s,t)\in I^3$ such that $r\leqslant s \leqslant t$, see \cite{Feller}, Chapter 3, Section 8. Note that this result is originally from a 1945 paper in French by Loève "Fonctions aléatoires du second ordre" \cite{Loeve}, which was later included in Lévy's book "Processus stochastiques et mouvement Brownien" \cite{Levy} in 1948 as a supplementary chapter. The result was later translated and included in Loève's book "Probability Theory" \cite{Loeve2} in 1963. The relationship (\ref{relrel}) is necessary and sufficient for a Gaussian process to be Markovian. Furthermore, it follows from the Markov property that, whenever $\var[X_s]=0$ for $s\in int(I)$, we have that $K_X(r,t)=0$ for all $(r,t) \in I^2$ such that $r\leqslant s \leqslant t$. We denote by $\mathcal R_X$ the boundary of the set of roots of the variance of $(X_t)$ strictly larger than $a$ and strictly smaller than $\sup(I)$, in increasing order.  Hence, if $(X_t)$ is continuous in probability, that is, its mean and covariance functions are continuous on $I$ and $I^2$ respectively, the ordered isolated roots in $\mathcal R_X$ define what we call the \textit{independence planes} of $(X_t)$: $K_X(s,t)$ is guaranteed to be zero unless $s$ and $t$ are in the same independence plane, i.e., the biggest element in $\mathcal R_X$ smaller than $s$ is also the biggest element in $\mathcal R_X$ smaller than $t$ and the smallest element in $\mathcal R_X$ bigger than $s$ is also the smallest element in $\mathcal R_X$ bigger than $t$. A direct consequence of that observation is that $K_X$ can be understood as a stitching of several covariance functions, each defined on one of the independence planes:
\begin{equation} \label{decom}
    K_X(s,t)= \sum_{i=0}^{|\mathcal R_X|} K_i(s,t) \1_{[z_i,z_{{i+1}})}(s,t),
\end{equation}
where $z_0:=a$, $z_1<z_2<\ldots$ are the elements of $\mathcal R_X$, $|\mathcal R_X| \in \N \cup \{\infty\}$ is the cardinal of $\mathcal R_X$, and $K_i$ is a covariance function for each $i=0,\ldots, |\mathcal R_X|$, i.e., a real symmetric positive definite function, on $[z_i,z_{{i+1}})$. When $|\mathcal R_X|$ is finite, we set $z_{|\mathcal R_X|+1}= \sup(I)$.

If $(X_t)$ is not continuous in probability, $z_1,z_2,\ldots$ do not characterise all the independence planes of $(X_t)$: it could be that there exists a point $s\in I$ such that $\var[X_s] \neq 0$ and $K_X(r,t)=0$ for all $(r,t) \in I^2$ such that $r < s < t$. Adding all such points $s$ to $\mathcal R_X$ while making sure that if one of these points $s$ is such that $K_X(s,t)=K_X(r,s)=0\neq \var[X_s]$ for all $(r,t) \in I^2$ such that $r < s < t$, it is added twice, and ordering the obtained set yields the \textit{independence partition} $\mathcal I_X=(\iota_1\leqslant\iota_2\leqslant\ldots)$ of the Gauss-Markov process $(X_t)$. Then, letting $\N_{|\mathcal I_X|}:= \N \cap \{0,\ldots,|\mathcal I_X|\}$, $\iota_0:=a$, and, when $|\mathcal I_X|$ is finite, $\iota_{|\mathcal I_X|+1}:= \sup(I)$, Eq. $(\ref{decom})$ can be adapted for stochastically discontinuous processes:
\begin{equation} \label{decom2}
    K_X(s,t)= \sum_{i=0}^{|\mathcal I_X|} K_i(s,t) \1_{\Gamma_i}(s,t),
\end{equation} 
where the sets $(\Gamma_i \mid i \in \N_{|\mathcal I_X|} )$ are such that 
\begin{enumerate}
    \item $(\Gamma_i=(\iota_i,\iota_{{i+1}}) \lor \Gamma_i=[\iota_i,\iota_{{i+1}}] \lor \Gamma_i=[\iota_i,\iota_{{i+1}}) \lor \Gamma_i=(\iota_i,\iota_{{i+1}}])$ holds true for each $ i \in \N_{|\mathcal I_X|}$,
    \item $\bigcup\limits_{i=0}^{|\mathcal I_X|} \Gamma_i=I$, $\bigcap\limits_{i=0}^{|\mathcal I_X|} \Gamma_i = \emptyset$,
\end{enumerate}
and $K_i:\Gamma_i \rightarrow \R$ is a covariance function (a trivial one if $\Gamma_i$ is a singleton) for each $ i \in \N_{|\mathcal I_X|}$. It follows that if $(X_t)$ is continuous in probability, Eq. $(\ref{decom2})$ is identical to Eq. $(\ref{decom})$.

Regardless of any continuity consideration, each $K_i$ must satisfy Eq. (\ref{relrel}) since restricting a Gauss-Markov process on a sub-interval yields a Gauss-Markov process. And Eq. (\ref{relrel}) implies the existence of real functions $f_i: \Gamma_i \rightarrow \R$ and $g_i:\Gamma_i \rightarrow \R$ such that 
\begin{equation}
    K_i(s,t) = f_i(\min(s,t)) g_i (\max(s,t))
\end{equation}
where $f_i/g_i$ is a positive nondecreasing function on $\Gamma_i$. To see this, let $x_i\in (\iota_i,\iota_{i+1})$ (if $(\iota_i,\iota_{i+1})$ is empty, i.e., $\Gamma_i$ is a singleton, then the result follows directly, so we exclude this case now) and choose for instance
\begin{equation}
f_i(x)= 
\begin{cases}K_i(x, x_i) & \text { for } x \leqslant x_i, \\ 
\frac{K_i(x, x) K_i(x_i, x_i) }{ K_i(x_i, x)} & \text { for } x>x_i,
\end{cases}
\end{equation}
\begin{equation}
g_i(x)= 
\begin{cases}\frac{K_i(x, x)}{K_i(x, x_i)}   & \text { for } x \leqslant x_i, \\ 
\frac{K_i(x_i, x)}{K_i(x_i, x_i)}   & \text { for } x>x_i.
\end{cases}
\end{equation}
Using Eq. (\ref{relrel}), we see that $f_i(\min(s,t)) g_i (\max(s,t)) = K_i(s,t)$. The positivity of $f_i/g_i$ follows from the positivity of the variance, and the monotonicity of $f_i/g_i$ follows from the Cauchy-Schwarz inequality applied to $K_i$. One can also show that the existence of such functions $f_i$ and $g_i$ is sufficient for Eq. (\ref{relrel}) to be satisfied. It is immediate that
\begin{equation}
     f_i(r) g_i (s)  f_i(s) g_i (t) =  f_i(s) g_i (s)  f_i(r) g_i (t)
\end{equation}
for all $(r,s,t)\in \Gamma_i^3$ such that $r\leqslant s \leqslant t$, but one must also show that $\\ f_i(\min(s,t)) g_i (\max(s,t))$ is a covariance function on $\Gamma_i$ for the sufficiency to hold. Let $k\in \N_0$, $(x_1<\ldots <x_k)\in \Gamma_i^k$, and $(y_1,\ldots, y_k)\in \R^k$. The positive definite property can be checked as follows: 
\begin{align}
    &\sum_{u,j=1}^k f_i(\min(x_u,x_j)) g_i (\max(x_u,x_j)) y_u y_j \nn \\
    &= \sum_{u=1}^k f_i(x_u) g_i(x_u) y_u^2+2 \sum_{u=1}^{k-1} \sum_{j=u+1}^k f_i(x_u) y_u  g_i(x_j) y_j  \\
    &= \sum_{u=1}^k \frac{f_i(x_u)}{g_i(x_u)} \left[g_i(x_u) y_u\right]^2  + 2  \sum_{u=1}^{k-1}\frac{f_i(x_u)}{g_i(x_u)} g_i(x_u) y_u \sum_{j=u+1}^k g_i(x_j) y_j\\
    &= \frac{f_i(x_k)}{g_i(x_k)} \left[g_i(x_k) y_k\right]^2 + \sum_{u=1}^{k-1}\frac{f_i(x_u)}{g_i(x_u)} \left( \left[g_i(x_u) y_u\right]^2 + 2 g_i(x_u) y_u \sum_{j=u+1}^k g_i(x_j) y_j \right) \\
    &= \frac{f_i(x_k)}{g_i(x_k)} \left[g_i(x_k) y_k\right]^2 + \sum_{u=1}^{k-1}\frac{f_i(x_u)}{g_i(x_u)} \left(\left[\sum_{j=u}^k g_i(x_j) y_j\right]^2 - \left[\sum_{j=u+1}^k g_i(x_j) y_j\right]^2 \right) \\
    &= \frac{f_i(x_k)}{g_i(x_k)} \left[g_i(x_k) y_k\right]^2+\sum_{u=1}^{k-1} \frac{f_i(x_u)}{g_i(x_u)}\left[\sum_{j=u}^k g_i(x_j) y_j\right]^2-\sum_{u=1}^{k-1} \frac{f_i(x_u)}{g_i(x_u)}\left[\sum_{j=u+1}^k g_i(x_j) y_j\right]^2 \\
    &= \frac{f_i(x_1)}{g_i(x_1)}\left[\sum_{j=1}^k g_i(x_j) y_j\right]^2+\sum_{u=1}^{k-1}\left[\frac{f_i(x_{u+1})}{g_i(x_{u+1})}-\frac{f_i(x_u)}{g_i(x_u)}\right]\left[\sum_{j=u+1}^k g_i(x_j) y_j\right]^2 \geqslant 0.
\end{align}
This means that a Gaussian process $(X_t)$ is Markovian if and only if there exists real functions $(f_i :\Gamma_i \rightarrow \R \mid i \in \N_{|\mathcal I_X|})$ and $(g_i :\Gamma_i \rightarrow \R \mid i \in \N_{|\mathcal I_X|})$ such that $f_i/g_i$ is positive nondecreasing on $\Gamma_i$ for all $i \in \N_{|\mathcal I_X|}$, and 
\begin{equation}
    K_X(s,t)= \sum_{i=0}^{|\mathcal I_X |} f_i(\min(s,t)) g_i (\max(s,t))  \1_{\Gamma_i}(s,t).
\end{equation}
We call any ordered sets of functions $(f_i \mid i \in \N_{|\mathcal I_X|})$ and $(g_i \mid i \in \N_{|\mathcal I_X|})$ satisfying the above equality the \textit{covariance factors} of the Gauss-Markov process $(X_t)$.
We use these factors to determine the expression of the quadratic variation of a Gauss-Markov semimartingale. Note that in the case of a Gauss-Markov semimartingale, $\Gamma_i= [\iota_i, \iota_{i+1})$ for all $i \in \N_{|\mathcal I_X|}$, and $\iota_i \neq \iota_j$ for any $(i,j)\in \N_{|\mathcal I_X|}^2$ such that $i \neq j$.
\section{Result}
Let $(X_t)_{t\in I}$ be a real Gauss-Markov semimartingale. By definition, this process is càdlàg and may have at most countably many almost sure jumps, only occurring at deterministic times. Hence, the covariance factors $(f_i,g_i)$ are at least piecewise continuous on $[\iota_i,\iota_{i+1})$, for all $i \in \N_{|\mathcal I_X|}$. This means that for each $i \in \N_{|\mathcal I_X|}$, letting $\N_{q(i)}:= \N \cap \{0, \ldots, q(i) \}$ where $q(i)$ is the sum of the number of discontinuities of $f_i$ and the number of discontinuities of $g_i$, there exist real functions $f_{i,j}$, $g_{i,j}$ defined on real intervals $A_j^i$, $B_j^i$ for all $j\in \N_{q(i)}$ such that
\begin{enumerate}
    \item $\bigcup\limits_{j=0}^{q(i)} A_j^i = \bigcup\limits_{j=0}^{q(i)} B_j^i = [\iota_i,\iota_{i+1})$, and $\bigcap\limits_{j=0}^{q(i)} A_j^i = \bigcap\limits_{j=0}^{q(i)} B_j^i = \emptyset$,
    \item $f_{i,j}:A_j \rightarrow \R$ is continuous, and $g_{i,j}:B_j \rightarrow \R$ is continuous, for all $j\in \N_{q(i)}$,
    \item  $f_i(x)= \sum\limits_{j=0}^{q(i)} f_{i,j}(x) \1_{A_j^i}(x)$, and $g_i(x)= \sum\limits_{j=0}^{q(i)} g_{i,j}(x) \1_{B_j^i}(x)$.
\end{enumerate}
We call the concatenation of the ordered sets $(f_{i,j} \mid j\in \N_{q(i)} )$ for all $i \in \N_{|\mathcal I_X|}$ and the concatenation of the ordered sets $(g_{i,j} \mid j\in \N_{q(i)} )$ for all $i \in \N_{|\mathcal I_X|}$ the \textit{continuous decompositions of the covariance factors} of the Gauss-Markov semimartingale $(X_t)$. Let 
\begin{equation}
    \mathcal D_X = \bigcup_{i=0}^{|\mathcal I_X |} \left( \{ t \in (\iota_i,\iota_{i+1}) \mid  f_i(t^-) \neq f_i(t^+) \}\cup \{ t \in (\iota_i,\iota_{i+1}) \mid  g_i(t^-) \neq g_i(t^+) \}\right)
\end{equation}
be the set of times at which discontinuities occur in the covariance factors of $(X_t)$, and let $\mathcal T_X = (\tau_1,\tau_2,\ldots)$  be the ordered set of elements in $\mathcal I_X \cup \mathcal D_X$ in increasing order. We introduce the integral notation $\int_\cdot^{t^-}$ for $\lim_{x\rightarrow t^-} \int_\cdot^{x}$ in the next result.
\begin{thm}
Let $(X_t)_{t \in I}$ be a real centred Gauss-Markov semimartingale. Then, for $t \in I$,
\begin{equation} \label{quadvar}
    [X]_t\eqas \int_{\tau_{m(t)}}^{t^-} g_{m(t)}^2(s) \rd \left(\frac{f_{m(t)}}{g_{m(t)}}(s) \right) + (\Delta X_{t})^2 +  \sum_{i=0}^{m(t)-1} \int_{\tau_i}^{\tau_{i+1}^-} g_i^2(s) \rd \left(\frac{f_i}{g_i}(s) \right)  + (\Delta X_{\tau_{i+1}})^2,
\end{equation}
where
\begin{enumerate}
    \item $\tau_0:=a$ and $(\tau_1, \tau_2, \ldots )$ is the ordered set $\mathcal T_X$,
    \item  the ordered sets $(f_0, f_1, \ldots )$ and $(g_0, g_1, \ldots )$ are the continuous decompositions of the covariance factors of $(X_t)$,
    \item $m(t):= \max \{i \in \N \mid \tau_i < t \}$, and $\Delta X_s :=  X_s - \lim\limits_{u\rightarrow s^-} X_u$.
\end{enumerate}
\end{thm}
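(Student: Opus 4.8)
The plan is to split the quadratic variation into a continuous part and a purely-jump part, to read the jumps off from the covariance factorisation, and to compute the continuous part on each interval where the factors are continuous by realising $X$ there as a space--time transformed Brownian motion. I would begin from the canonical identity $[X]_t=\langle X^c\rangle_t+\sum_{a<s\leqslant t}(\Delta X_s)^2$, valid for any semimartingale, where $\langle X^c\rangle$ is the bracket of the continuous local martingale component of $X$; two things then have to be determined: where $X$ jumps, and what $\langle X^c\rangle$ is.

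\textbf{The jumps.} For $s\in(\tau_i,\tau_{i+1})$ the factors $f_i,g_i$ are continuous at $s$ because $\mathcal T_X\supseteq\mathcal D_X$, and since $X$ is centred Gaussian and càdlàg one has $X_u\to X_{s^-}$ in $L^2$ as $u\uparrow s$, so that
\[
\mathrm{Var}(\Delta X_s)=\lim_{u\uparrow s}\bigl(K_X(s,s)-2K_X(u,s)+K_X(u,u)\bigr)=f_i(s)g_i(s)-2f_i(s^-)g_i(s)+f_i(s^-)g_i(s^-)=0 .
\]
Hence $\Delta X_s=0$ a.s.\ for every such $s$, and since the jump times of $X$ are deterministic it follows that a.s.\ $X$ jumps only at points of $\mathcal T_X$. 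As $\Delta X_t=0$ a.s.\ when $t\notin\mathcal T_X$, the jump sum collapses to $\sum_{i=0}^{m(t)-1}(\Delta X_{\tau_{i+1}})^2+(\Delta X_t)^2$, which are exactly the jump terms in \eqref{quadvar}; this also explains why equality holds only almost surely.

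\textbf{The continuous part.} I would then fix $i$ and work on $(\tau_i,\tau_{i+1})$, where $K_X(s,t)=f_i(\min(s,t))g_i(\max(s,t))$ with $\rho_i:=f_i/g_i$ positive, continuous and nondecreasing; in particular $g_i$ never vanishes there. Then $Y_s:=X_s/g_i(s)$ is centred Gaussian with $\mathrm{Cov}(Y_s,Y_t)=\min(\rho_i(s),\rho_i(t))$, so $Y_s=W_{\rho_i(s)}$ for a Brownian motion $W$; and the Markov property of $X$ --- through the Gaussian regression $\mathbb{E}[X_r\mid\mathcal F^X_s]=(g_i(r)/g_i(s))\,X_s$ --- makes $s\mapsto W_{\rho_i(s)}$ a continuous $(\mathcal F^X_s)$-martingale with bracket $\rho_i$. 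Thus $X_s=g_i(s)W_{\rho_i(s)}$ on $(\tau_i,\tau_{i+1})$. Granting that $g_i$ is of locally finite variation there, integration by parts gives for $\tau_i<u<r<\tau_{i+1}$
\[
X_r=X_u+\int_u^r g_i(v)\,\mathrm{d}W_{\rho_i(v)}+\int_u^r W_{\rho_i(v)}\,\mathrm{d}g_i(v),
\]
a sum of a continuous local martingale and a continuous finite-variation process (the covariation term vanishing since $g_i$ is continuous of finite variation); by uniqueness of the semimartingale decomposition, $X^c$ has increments $\int g_i\,\mathrm{d}W_{\rho_i(\cdot)}$ on this interval, so $\langle X^c\rangle_r-\langle X^c\rangle_u=\int_u^r g_i^2(v)\,\mathrm{d}\rho_i(v)$. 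Letting $u\downarrow\tau_i$ and $r\uparrow\tau_{i+1}$, using continuity of $\langle X^c\rangle$, summing over $i=0,\dots,m(t)-1$ and adjoining the final piece on $(\tau_{m(t)},t)$ then yields the integral terms of \eqref{quadvar}. (Equivalently and more elementarily, one expands $\mathbb{E}[(X_{t_{k+1}}-X_{t_k})^2]=g_i^2(t_{k+1})\,\Delta_k\rho_i+\rho_i(t_k)\,(\Delta_k g_i)^2$ over a partition, with $\Delta_k h:=h(t_{k+1})-h(t_k)$, and passes to the limit.)

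\textbf{The main obstacle.} The crux is the hypothesis just used: that each factor $g_i$ is of locally finite variation on $(\tau_i,\tau_{i+1})$ --- equivalently, that the deterministic function $g_i$ has vanishing quadratic variation, so that the extraneous term $\rho_i(t_k)\,(\Delta_k g_i)^2$ disappears in the limit. This is precisely where the semimartingale hypothesis is indispensable: if $g_i$ had infinite variation on a subinterval on which $\rho_i$ is strictly increasing, then $X=g_i W_{\rho_i(\cdot)}$ would carry the deterministic ``rough'' component $\int W_{\rho_i(\cdot)}\,\mathrm{d}g_i$, which is neither a local martingale nor of finite variation, contradicting that $X$ is a semimartingale; I would make this rigorous through a quadratic-covariation estimate along refining partitions or by invoking the known characterisations of Gaussian semimartingales. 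The remaining issues are routine bookkeeping: the one-sided limits $\int_\cdot^{t^-}$ at the right endpoints follow from the continuity of $\langle X^c\rangle$ together with the monotonicity and right-continuity of $\rho_i$, and the degenerate case $t=a$ (where $[X]_a=0$) is checked directly.
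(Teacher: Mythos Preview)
Your approach is sound and takes a genuinely different route from the paper. The paper proceeds by a direct second-moment computation: on each interval $[\tau_i,\tau_{i+1}]$ it evaluates $\E\bigl[\sum_k(X_{t_{k+1}}-X_{t_k})^2\bigr]$ explicitly in terms of $f_i,g_i$, recognises the limit as the Riemann--Stieltjes integral $\int g_i^2\,\rd(f_i/g_i)$ plus $\E[(\Delta X_{\tau_{i+1}})^2]$, and then shows that $\var\bigl[\sum_k(X_{t_{k+1}}-X_{t_k})^2-(\Delta X_{\tau_{i+1}})^2\bigr]\to 0$ by expanding via the Gaussian identity $\cov(Y^2,Z^2)=2\,\cov(Y,Z)^2$ and controlling the resulting double sums term by term. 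Your argument instead realises $X$ on each piece as $g_i\,W_{\rho_i(\cdot)}$ and reads the bracket off from the semimartingale decomposition. The paper's method is elementary and self-contained---Gaussian moments plus Riemann--Stieltjes integration, no It\^o calculus or representation theorems---while yours is more conceptual and makes transparent why the formula has the shape it does. The price you pay is the lemma that $g_i$ has locally finite variation, which you correctly isolate as the point where the semimartingale hypothesis bites; the paper never states this lemma, but its variance computation tacitly uses the equivalent fact that $\sum_k(f_i(t_{k+1})-f_i(t_k))^2\to 0$ and $\sum_k(g_i(t_{k+1})-g_i(t_k))^2\to 0$ along refining partitions. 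Your parenthetical expansion $\E[(X_{t_{k+1}}-X_{t_k})^2]=g_i^2(t_{k+1})\,\Delta_k\rho_i+\rho_i(t_k)\,(\Delta_k g_i)^2$ is exactly the identity underlying the paper's first-moment step, so your ``more elementary'' alternative is essentially the paper's own proof.
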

We give a few remarks before giving the proof.
\begin{rem} \phantom{.}
\begin{enumerate}
    \item All the Riemann-Stieltjes integrals (see Section 36 in \cite{Kolmogorov}) that appear in Eq. (\ref{quadvar}) are well defined since the integrators $s \mapsto f_i(s)/g_i(s)$, $i=0,1, \ldots$, are monotone functions on the domains of integration $[\tau_i, \tau_{i+1})$, $i=0,1, \ldots$, of their respective integrals. 
    \item If the process $(X_t)$ is sample-continuous, then $\Delta X_s = 0$ for all $s\in I$, which means that the quadratic variation $[X]_t$ is a deterministic function. 
    \item The sum $\sum_{\tau_0 \leqslant s \leqslant t} (\Delta X_s)^2$ contains at most countably many terms. This is because $(X_t)$ is a semimartingale, hence a càdlàg process. 
    \item The process $(X_t)$ might not jump at every time $\tau_i$ for $i=1,2,\ldots$. It could also not jump at all. This is because $\mathcal T_X$ contains all jump times if there are any and might contain more than just jump times. For instance, if we stitch two independent Brownian bridges together, the time at which the stitching happens is in $\mathcal T_X$, but the process does not jump anywhere.
\end{enumerate}
\end{rem}
\begin{proof}
Since $[X]_t= [X]_{[\tau_{m(t)},t]} + \sum_{i=0}^{m(t)-1} [X]_{[\tau_i,\tau_{i+1}]}$, it is enough to prove that
\begin{equation}
    [X]_{\tau_1}\eqas  \int_{\tau_0}^{\tau_1^-} g_0^2(s) \d \left(\frac{f_0}{g_0}(s) \right) + (\Delta X_{\tau_1})^2.
\end{equation}
Let $(P_n)_{n\in \N_0}$ be a sequence of partitions of $[\tau_0,\tau_1]$ such that $P_n:=\{t_0, t_1, \ldots, t_n \| \tau_0=t_0< t_1< \ldots< t_{n-1} < t_n=\tau_1\}$, and $\Delta_j= X_{t_{j+1}} - X_{t_j}$. Then,
\begin{align}
    \E \left[ \sum_{j=0}^{n-1} \Delta_j^2 \right] &= \sum_{j=0}^{n-2} \E [X_{t_{j+1}}^2] + \E[X_{t_{j}}^2] - 2\E [X_{t_{j+1}}X_{t_{j}}] \nn\\ 
    & \hspace{2cm} +\E [X_{{\tau_1}}^2] + \E[X_{t_{n-1}}^2] - 2\E [X_{\tau_1}X_{t_{n-1}}] \nn\\ 
    &=\sum_{j=0}^{n-2} f_0(t_{j+1})g_0(t_{j+1}) + f_0(t_{j})g_0(t_{j}) - 2 f_0(t_{j})g_0(t_{j+1}) \nn \\ 
    & \hspace{2cm} +f_1(\tau_1)g_1(\tau_1) + f_0(t_{n-1})g_0(t_{n-1}) - 2 f_0(t_{n-1})g_1(\tau_1) \nn\\
    &=\sum_{j=0}^{n-2} g_0(t_{j+1})(f_0(t_{j+1}) - f_0(t_{j})) - f_0(t_{j})(g_0(t_{j+1}) - g_0(t_{j})) \nn \\
    & \hspace{2cm} +g_1(\tau_1)(f_1(\tau_1) - f_0(t_{n-1})) - f_0(t_{n-1})(g_1(\tau_1) - g_0(t_{n-1})) \nn\\
    &=\sum_{j=0}^{n-2} g_0(t_{j})g_0(t_{j+1}) \frac{g_0(t_{j+1})(f_0(t_{j+1}) - f_0(t_{j})) - f_0(t_{j})(g_0(t_{j+1}) - g_0(t_{j}))}{g_0(t_{j})g_0(t_{j+1})} \nn \\
    & \hspace{2cm} +g_1(\tau_1)(f_1(\tau_1) - f_0(t_{n-1})) - f_0(t_{n-1})(g_1(\tau_1) - g_0(t_{n-1}))\nn \\
    &=: S_{n-2} + s_{n-1}.
\end{align}
When $n$ increases as the mesh of the partition goes to $0$, we get 
\begin{equation}
    \lim\limits_{n\rightarrow \infty} S_{n-2}  = \int_{\tau_0}^{\tau_1^-} g_0^2(s) \d \left(\frac{f_0}{g_0}(s) \right),
\end{equation}
and
\begin{align}
    \lim\limits_{n\rightarrow \infty} s_{n-1} &= g_1(\tau_1)(f_1(\tau_1) - f_0(\tau_1^-)) - f_0(\tau_1^-)(g_1(\tau_1) - g_0(\tau_1^-)) \nn\\
    &=f_1({\tau_1})g_1({\tau_1}) + f_0({\tau_1^-})g_0({\tau_1^-}) - 2 f_0({\tau_1^-})g_1({\tau_1}) \nn\\
    &=\E [X_{\tau_1}^2] + \E[X_{{\tau_1^-}}^2] - 2\E [X_{{\tau_1}}X_{{\tau_1^-}}] \nn\\
    &= \E[(\Delta X_{\tau_1})^2].
\end{align}
This means that
\begin{equation}
    \lim\limits_{n\rightarrow \infty} \E[ [X]_{\tau_1}^{P_n} - (\Delta X_{\tau_1})^2] = \int_{\tau_0}^{\tau_1^-} g_0^2(s) \d \left(\frac{f_0}{g_0}(s) \right).
\end{equation}
Furthermore,
\begin{align}
    &\var \left[ \sum_{j=0}^{n-1} \Delta_j^2 - (\Delta X_{\tau_1})^2 \right] \nn \\
    &=\E\left[\left( \left(\sum_{j=0}^{n-1} \Delta_j^2\right) - (\Delta X_{\tau_1})^2\right)^2\right] - \E^2\left[ \left(\sum_{j=0}^{n-1} \Delta_j^2\right) - (\Delta X_{\tau_1})^2\right]\\
    &= \E \left[\left(\sum_{j=0}^{n-1} \Delta_j^2\right)^2 \right] + \E[ (\Delta X_{\tau_1})^4 ] -2 \sum_{j=0}^{n-1}\E\left[\Delta_j^2(\Delta X_{\tau_1})^2 \right] - \E^2\left[ \left(\sum_{j=0}^{n-1} \Delta_j^2\right) - (\Delta X_{\tau_1})^2\right] \\
    &= 2 \sum_{j=0}^{n-1} \sum_{i=0}^{n-1} \E^2 \left[\Delta_i \Delta_j \right]+  \sum_{j=0}^{n-1} \sum_{i=0}^{n-1}  \E \left[\Delta_i^2 \right]\E \left[ \Delta_j^2 \right]+ 3\E^2[ (\Delta X_{\tau_1})^2 ] \nn \\
    &\hspace{1cm} -4 \sum_{j=0}^{n-1}\E^2\left[\Delta_j(\Delta X_{\tau_1}) \right]-2 \sum_{j=0}^{n-1}\E\left[\Delta_j^2(\Delta X_{\tau_1})^2 \right] - \E^2\left[ \left(\sum_{j=0}^{n-1} \Delta_j^2\right) - (\Delta X_{\tau_1})^2\right],
\end{align}
where we used in the last step the fact that for two centred and jointly Gaussian random variables $Y$ and $Z$, we have $\cov(Y^2,Z^2)=2\cov^2(Y,Z)$.
To conclude the proof, we need to show that this variance goes to zero as $n$ goes to infinity. We start by looking at the first term $2 \sum\limits_{j=0}^{n-1} \sum\limits_{i=0}^{n-1} \E^2 \left[\Delta_i \Delta_j \right]$:
\begin{equation}
\E \left[\Delta_i \Delta_j \right] =  \left\{
    \begin{array}{ll}
        (f_0(t_{i+1}) - f_0(t_{i}))(g_0(t_{j+1}) - g_0(t_{j}))  & \mbox{for } i<j<n-1,\\ \\
         (f_0(t_{j+1}) - f_0(t_{j}))(g_0(t_{i+1}) - g_0(t_{i}))  & \mbox{for } j<i<n-1, \\ \\
         g_0(t_{j+1})(f_0(t_{j+1}) - f_0(t_{j})) - f_0(t_{j})(g_0(t_{j+1}) - g_0(t_{j})) & \mbox{for } i=j<n-1, \\ \\
        (f_0(t_{i+1}) - f_0(t_{i}))(g_1({\tau_1}) - g_0(t_{n-1}))  & \mbox{for } i<j=n-1,\\ \\
         (f_0(t_{j+1}) - f_0(t_{j}))(g_1({\tau_1}) - g_0(t_{n-1}))  & \mbox{for } j<i=n-1, \\ \\
         g_1({\tau_1})(f_1({\tau_1}) - f_0(t_{n-1})) - f_0(t_{n-1})(g_1({\tau_1}) - g_0(t_{n-1})) & \mbox{for } i=j=n-1.
    \end{array}
\right.
\end{equation}
We split the sum into the cases $i<j$, $j<i$, and $i=j$. For $i=j$,
\begin{equation}
    2 \sum_{j=0}^{n-2}  \E^2 \left[\Delta_j^2 \right] \leqslant 2\max_{k \in\{0,1,\ldots, n-2 \}} \E [ \Delta_k^2] \sum_{j=0}^{n-2}  \E [ \Delta_j^2] \xrightarrow[n \rightarrow \infty]{} 0, 
\end{equation}
and so
\begin{equation}
    2 \sum_{j=0}^{n-1}  \E^2 \left[\Delta_j^2 \right] = 2 \sum_{j=0}^{n-2}  \E^2 \left[\Delta_j^2 \right] + 2\E^2 \left[\Delta_{n-1}^2 \right]  \xrightarrow[n \rightarrow \infty]{} 2\E^2[ (\Delta X_{\tau_1})^2 ].
\end{equation}
For $i<j$, we have 
\begin{align}
    2 \sum_{j=0}^{n-1} \sum_{i=0}^{j-1} \E^2 \left[\Delta_i \Delta_j \right]&= 2 \sum_{i=0}^{n-2} \E^2 \left[\Delta_i \Delta_{n-1} \right]+ 2 \sum_{j=0}^{n-1} \sum_{i=0}^{j-1} \E^2 \left[\Delta_i \Delta_j \right]  \nn \\
    &= 2\sum_{i=0}^{n-2}(f_0(t_{i+1}) - f_0(t_{i}))^2(g_1({\tau_1}) - g_0(t_{n-1}))^2  \nn \\
    & \hspace{2cm} 2 \sum_{j=0}^{n-2} \sum_{i=0}^{j-1} (f_0(t_{i+1}) - f_0(t_{i}))^2(g_0(t_{j+1}) - g_0(t_{j}))^2 \nn \\ 
    &=2 (g_1({\tau_1}) - g_0(t_{n-1}))^2\sum_{i=0}^{n-2} (f_0(t_{i+1}) - f_0(t_{i}))^2  \nn \\
    & \hspace{2cm} + 2\sum_{j=0}^{n-2} (g_0(t_{j+1}) - g_0(t_{j}))^2 \left( \sum_{i=0}^{j-1} (f_0(t_{i+1}) - f_0(t_{i}))^2\right)\nn \\ 
    &\leqslant 2\left((g_1({\tau_1}) - g_0(t_{n-1}))^2+ \sum_{j=0}^{n-2} (g_0(t_{j+1}) - g_0(t_{j}))^2\right)  \nn \\
    & \hspace{2cm} \left( \sum_{i=0}^{n-2}(f_0(t_{i+1}) - f_0(t_{i}))^2\right) \nn \\
    & \xrightarrow[n \rightarrow \infty]{} 0
\end{align}
since $(g_1({\tau_1}) - g_0(t_{n-1}))^2+ \sum_{j=0}^{n-2} (g_0(t_{j+1}) - g_0(t_{j}))^2 \xrightarrow[n \rightarrow \infty]{} (g_1(\tau_1)-g_0(\tau^-))^2$ and $\sum\limits_{i=0}^{n-2} (f_0(t_{i+1}) - f_0(t_{i}))^2 \xrightarrow[n \rightarrow \infty]{} [f_0]_{\tau_1^-}=0$. The same argument can be applied to the case $j<i$ to show that 
\begin{equation} 
     2 \sum_{i=0}^{n-1} \sum_{j=0}^{i-1} \E^2 \left[\Delta_i \Delta_j \right] \xrightarrow[n \rightarrow \infty]{} 0.
\end{equation}
Combining the three cases, we conclude that 
\begin{equation} \label{eq1}
    2 \sum_{j=0}^{n-1} \sum_{i=0}^{n-1} \E^2 \left[\Delta_i \Delta_j \right] \xrightarrow[n \rightarrow \infty]{} 2\E^2[ (\Delta X_{\tau_1})^2 ].
\end{equation}
We already know that the second term appearing in the variance converges: 
\begin{equation}
    \sum_{j=0}^{n-1} \sum_{i=0}^{n-1}  \E \left[\Delta_i^2 \right]\E \left[ \Delta_j^2 \right] =\left(\sum_{i=0}^{n-1}  \E \left[\Delta_i^2 \right]\right)^2 \xrightarrow[n \rightarrow \infty]{} \left(\int_{\tau_0}^{\tau_1^-} g_0^2(s) \d \left(\frac{f_0}{g_0}(s) \right) +\E[(\Delta X_{\tau_1})^2] \right)^2,
\end{equation}
and that the third term does not depend on $n$. Now we consider the fourth term \\ $-4 \sum_{j=0}^{n-1}\E^2\left[\Delta_j(\Delta X_{\tau_1}) \right]$: 
\begin{align}
    -4 \sum\limits_{j=0}^{n-1}\E^2\left[\Delta_j(\Delta X_{\tau_1}) \right] &= -4 \sum\limits_{j=0}^{n-2}\E^2\left[\Delta_j(\Delta X_{\tau_1}) \right] -4 \E^2\left[\Delta_{n-1}(\Delta X_{\tau_1})\right] \nn \\
    &=-4(g_1(\tau) - g_0(\tau^{-}))^2\sum_{j=0}^{n-2} (f_0(t_{j+1}) - f_0(t_{j}))^2  -4 \E^2\left[\Delta_{n-1}(\Delta X_{\tau_1})\right] \nn \\
    & \xrightarrow[n \rightarrow \infty]{} -4(g_1(\tau) - g_0(\tau^{-}))^2 [f_0]_{\tau_1^-} - 4 \E^2\left[(\Delta X_{\tau_1})^2\right] = -4 \E^2\left[(\Delta X_{\tau_1})^2\right]. \label{eq2}
\end{align}
We already know the limit of the fifth and sixth terms appearing in the variance:
\begin{align}
    -2 \sum_{j=0}^{n-1}\E\left[\Delta_j^2(\Delta X_{\tau_1})^2 \right]&= -2\E\left[(\Delta X_{\tau_1})^2 \right] \sum_{j=0}^{n-1}\E\left[\Delta_j^2 \right]  \nn \\
    &\xrightarrow[n \rightarrow \infty]{}  -2\E\left[(\Delta X_{\tau_1})^2 \right] \int_{\tau_0}^{\tau_1^-} g_0^2(s) \d \left(\frac{f_0}{g_0}(s) \right), \\
    - \E^2\left[ \left(\sum_{j=0}^{n-1} \Delta_j^2\right) - (\Delta X_{\tau_1})^2\right] &\xrightarrow[n \rightarrow \infty]{} -\left( \int_{\tau_0}^{\tau_1^-} g_0^2(s) \d \left(\frac{f_0}{g_0}(s) \right) \right)^2.
\end{align}
Combining all the limits, we conclude that
\begin{align}
    \var \left[ [X]^{P_n}_{\tau_1} - (\Delta X_{\tau_1})^2 \right]  \xrightarrow[n \rightarrow \infty]{} &2\E^2[ (\Delta X_{\tau_1})^2 ] + \left(\int_{\tau_0}^{\tau_1^-} g_0^2(s) \d \left(\frac{f_0}{g_0}(s) \right) +\E[(\Delta X_{\tau_1})^2] \right)^2 \nn \\
    &\quad + 3\E^2[ (\Delta X_{\tau_1})^2 ] -4 \E^2\left[(\Delta X_{\tau_1})^2\right] \nn \\
    &\quad -2\E\left[(\Delta X_{\tau_1})^2 \right] \int_{\tau_0}^{\tau_1^-} g_0^2(s) \d \left(\frac{f_0}{g_0}(s) \right) \nn \\
    &\quad -\left( \int_{\tau_0}^{\tau_1^-} g_0^2(s) \d \left(\frac{f_0}{g_0}(s) \right) \right)^2 =0.
\end{align}
Hence,
\begin{equation}
    [X]_{\tau_1}\eqas  \int_{\tau_0}^{\tau_1^-} g_0^2(s) \d \left(\frac{f_0}{g_0}(s) \right) + (\Delta X_{\tau_1})^2.
\end{equation}
\end{proof}
We can now treat the case where the mean of $(X_t)$ is not zero.
\begin{coro}
Let $(Y_t)_{t \in I}$ be a real Gauss-Markov semimartingale, $\mu_Y(t):= \E[Y_t]$, and $X_t:= Y_t - \mu_Y(t)$. Then, for $t \in I$,
\begin{equation}
    [Y]_t= [X]_t + 2 \sum_{a \leqslant s \leqslant t} \Delta X_s \Delta \mu_Y(s) + \sum_{a \leqslant s \leqslant t}   (\Delta \mu_Y(s))^2.
\end{equation}
\end{coro}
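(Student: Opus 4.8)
The plan is to reduce the computation of $[Y]_t$ to that of $[X]_t$ by expanding increments along partitions. Fix $t\in I$ and let $(P_n)$ be a sequence of partitions of $[a,t]$ with mesh tending to zero, written $P_n = \{t_0 = a < t_1 < \ldots < t_n = t\}$. Writing $Y_{t_{j+1}} - Y_{t_j} = (X_{t_{j+1}} - X_{t_j}) + (\mu_Y(t_{j+1}) - \mu_Y(t_j))$, we get
\begin{equation}
\sum_{j=0}^{n-1} (Y_{t_{j+1}} - Y_{t_j})^2 = \sum_{j=0}^{n-1} (X_{t_{j+1}} - X_{t_j})^2 + 2\sum_{j=0}^{n-1} (X_{t_{j+1}} - X_{t_j})(\mu_Y(t_{j+1}) - \mu_Y(t_j)) + \sum_{j=0}^{n-1} (\mu_Y(t_{j+1}) - \mu_Y(t_j))^2.
\end{equation}
The first sum converges almost surely to $[X]_t$ along a suitable sequence of refining partitions by the theorem (together with the decomposition $[X]_t = [X]_{[\tau_{m(t)},t]} + \sum_i [X]_{[\tau_i,\tau_{i+1}]}$ used in its proof). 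So the work is to identify the limits of the two remaining sums.

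For the cross term, I would argue that $\mu_Y$, being the mean of a Gauss-Markov semimartingale, is a càdlàg function of finite variation with at most countably many jumps, all at deterministic times contained in $\mathcal T_X$; hence $\mu_Y = \mu_Y^c + \mu_Y^d$ where $\mu_Y^c$ is continuous and $\mu_Y^d$ is a pure-jump function. The contribution of $\mu_Y^c$ to $\sum_j (X_{t_{j+1}}-X_{t_j})(\mu_Y(t_{j+1})-\mu_Y(t_j))$ vanishes in the limit: $X$ is càdlàg hence locally bounded on a path, and $\sum_j |X_{t_{j+1}}-X_{t_j}|\,|\mu_Y^c(t_{j+1})-\mu_Y^c(t_j)|$ is bounded by $\big(\sup_j |X_{t_{j+1}}-X_{t_j}|\big)\cdot \mathrm{Var}(\mu_Y^c)$, and $\sup_j |X_{t_{j+1}}-X_{t_j}| \to 0$ a.s. by uniform continuity of càdlàg paths away from their (finitely many, in $[a,t]$) jumps — more carefully, one isolates the jump times of $X$ in small intervals and uses continuity of $\mu_Y^c$ there. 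What survives is the pure-jump part: for each jump time $s$ of $\mu_Y^d$ in $[a,t]$, eventually some partition interval $(t_j,t_{j+1}]$ contains $s$ and only $s$, and the corresponding term tends to $\Delta X_s \,\Delta\mu_Y(s)$; summing over the countably many jump times and controlling the tail (the total jump contribution is absolutely summable since $\sum_s (\Delta\mu_Y(s))^2 < \infty$ and $\sum_s (\Delta X_s)^2 = \sum (\Delta X_s)^2 < \infty$ a.s., so by Cauchy–Schwarz $\sum_s |\Delta X_s||\Delta\mu_Y(s)| < \infty$) gives $\sum_{a\leqslant s\leqslant t}\Delta X_s\,\Delta\mu_Y(s)$. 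Similarly $\sum_j (\mu_Y(t_{j+1})-\mu_Y(t_j))^2 \to \sum_{a\leqslant s\leqslant t}(\Delta\mu_Y(s))^2$, the continuous part again contributing nothing because $\sum_j (\mu_Y^c(t_{j+1})-\mu_Y^c(t_j))^2 \leqslant \big(\sup_j|\mu_Y^c(t_{j+1})-\mu_Y^c(t_j)|\big)\mathrm{Var}(\mu_Y^c) \to 0$.

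The main obstacle is the cross term, and specifically making rigorous the claim that the ``continuous part'' of $\mu_Y$ contributes nothing in the limit while picking out exactly the jumps: this requires care because $X$ itself jumps, so one cannot simply invoke uniform continuity of the paths of $X$. The clean way is to note that on $[a,t]$ the path $s\mapsto X_s(\omega)$ has only finitely many jumps exceeding any $\delta>0$; cover each such jump by a short interval, on the complement $X(\omega)$ is uniformly continuous, and $\mu_Y^c$ has arbitrarily small oscillation on the short intervals, so both the near-jump and the away-from-jump contributions of $\mu_Y^c$ can be made arbitrarily small. One should also confirm that the almost-sure convergence of the quadratic-variation sums holds simultaneously along the same partition sequence for all three sums; since each converges a.s. along refining partitions whose mesh $\to 0$, passing to a common refining sequence suffices. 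Everything else is an elementary expansion and bookkeeping of countable sums.
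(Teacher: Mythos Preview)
Your partition expansion is sound, and once one knows that $\mu_Y$ has finite variation on compacts the remainder of your sketch can be made rigorous. But you assert this finite-variation property without proof (``$\mu_Y$, being the mean of a Gauss-Markov semimartingale, is a c\`adl\`ag function of finite variation''), and that is precisely the substantive step. The paper devotes most of its proof of the corollary to establishing it: for a sequence $(h_n)$ of left-continuous step functions on $[a,t]$ decreasing to $0$, the elementary integrals $V_n=\int_a^t h_n(s)\,\mathrm{d}Y_s$ converge to $0$ in probability because $Y$ is a semimartingale; since each $V_n$ is Gaussian, convergence in probability upgrades to convergence in $L^1$, so $\E[V_n]=\int_a^t h_n(s)\,\mathrm{d}\mu_Y(s)\to 0$, which is exactly the criterion for $\mu_Y$ to have finite variation. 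Without this, your decomposition $\mu_Y=\mu_Y^c+\mu_Y^d$ and every bound that follows (all of which invoke $\mathrm{Var}(\mu_Y^c)<\infty$) are unjustified.

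A minor error: the jump times of $\mu_Y$ need not lie in $\mathcal T_X$. The paper's last example has $\mu$ jumping at $t=1/4$ while $\mathcal T_X=\{1/2\}$; $\mathcal T_X$ is built from the covariance factors of the centred process and carries no information about the mean.

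Once finite variation of $\mu_Y$ is in hand, the paper's route is shorter than yours: it writes $[Y]=[X+\mu_Y]=[X]+2[X,\mu_Y]+[\mu_Y]$ by bilinearity and then quotes the standard fact (Kallenberg, Theorem~26.6) that for a finite-variation process $A$ one has $[X,A]_t=\sum_{s\leqslant t}\Delta X_s\,\Delta A_s$ and $[A]_t=\sum_{s\leqslant t}(\Delta A_s)^2$. Your direct partition argument is essentially a hands-on reproof of these identities; it is more elementary but considerably longer, and the delicate part you flag (controlling the cross term near the jumps of $X$) is exactly what the cited theorem packages for you.
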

\begin{proof}
First, we show that $\mu_Y$ is of bounded variation. Let $t \in I$. Let $(h_n)_{n \in \N_0}$ be a sequence of real left-continuous step functions on $[0,t]$ decreasing to $0$. We define the elementary integrals $V_n:= \int_{a}^t h_n(s) \d Y_s$, and $v_n:=\E[V_n]=\int_{a}^t h_n(s) \d \mu_Y(s)$. Since $(Y_t)$ is a semimartingale, the sequence of Gaussian random variables $(V_n)$ converges to $0$ in probability. This implies that it also converges to $0$ in $L^1$ (see Section 13.7 in \cite{Williams}), which means that the sequence of real numbers $(v_n)$ converges to $0$. Hence $\mu_Y$ is of bounded variation. Returning to the quadratic variation of $(Y_t)$, we have 
\begin{align}
    [Y]_t&= [X + \mu_Y]_t \nn \\
    &= [X]_t  + 2 [X, \mu_Y]_t + [\mu_Y]_t.
\end{align}
Since $\mu_Y$ is of bounded variation, we have $[\mu_Y]_t = \sum_{a \leqslant s \leqslant t} (\Delta\mu_Y(s))^2$ and 
\begin{equation}
    [X, \mu_Y]_t = \int_a^t \Delta X_s \d \mu_Y(s)= \sum_{a \leqslant s \leqslant t} \Delta X_s \Delta \mu_Y(s),
\end{equation}
see Theorem 26.6 in \cite{Kallenberg}.
\end{proof}
We illustrate the result with a few examples.
\begin{ex}
If $I=[0,\infty)$ and $(X_t)_{t\in I}$ is a standard Brownian motion, then $\mathcal T_X= \emptyset$, and the covariance factors are $f_0(x)= x$ and $g_0(x)=1$. Hence, $[X]_t=\int_0^{t^-} \d s = t^- = t$ for any $t\in I$. More generally, if $(X_t)_{t\in I}$ is a continuous Gauss-Markov semimartingale with independent increments, then $\mathcal T_X= \emptyset$, and the covariance factors are a positive non-decreasing function $f_0(x)$ and $g_0(x)=1$. Hence, $[X]_t=\int_0^{t} \d f_0(x)$ for any $t\in I$.
\end{ex}
\begin{ex}
If $I=[0,\infty)$ and $(X_t)_{t\in I}$ is a continuous stationary\footnote{A stochastic process $(X_t)_{t\in I}$ is said to be stationary if it is square integrable, has constant mean, and $\cov(X_s,X_t)$ only depends on $\abs{s-t}$. } Gauss-Markov semimartingale, then $\mathcal T_X= \emptyset$, and the covariance factors are $f_0(x)= \alpha \exp[\beta x]$ and $g_0(x)=\exp[-\beta x]$ for $(\alpha,\beta)\in \R^2_+$ (this follows from the derivation of the covariance function of an Ornstein-Uhlenbeck process in \cite{Ornstein}). Hence, $[X]_t=\alpha \int_0^{t} \exp[-2 \beta x] \rd (\exp[2 \beta x]) = 2 \alpha\beta \int_0^{t}  \rd x= 2 \alpha\beta t$ for any $t\in I$.
\end{ex}
\begin{ex}
If $I=[0,\infty)$ and $(X_t)_{t\in I}$ is a Gauss-Markov semimartingale defined by $\mu_X(t)=0$ and 
\begin{equation}
K_X(s,t) =  \left\{
    \begin{array}{ll}
        \min(s,t)(1-\max(s,t))  & \mbox{for } s,t < 1,\\
        \min(s,t) -1  & \mbox{for } 1 \leqslant s,t < 2 \mbox{ or } s,t \geqslant 2, \\
        0 & \mbox{otherwise}.
    \end{array}
\right.
\end{equation}
then $\mathcal T_X= (1,2)$, and the covariance factors are $f_0(x)= x$, $f_1(x)= f_2(x)=x-1$, $g_0(x)=1-x$, $g_1(x)=g_2(x)=1$, which are all continuous. Note that at $t=1$, the semimartingale is continuous, but not at $t=2$. Hence, 
\begin{equation}
[X]_t  =  \left\{
    \begin{array}{ll}
        \int_0^{t^-} (1-x)^2 \d (x/(1-x)) =t  & \mbox{for } t < 1,\\
        1 + \int_1^{t^-} \d x = t & \mbox{for } 1 \leqslant t < 2, \\
        2+ \int_2^{t^-} \d x + \Delta X_{2}= t + \Delta X_{2}  & \mbox{for } t \geqslant 2,
    \end{array}
\right.
\end{equation}
where $\Delta X_{2} \sim \mathcal N(0,2)$.
\end{ex}
\begin{ex}
If $I=[0,1)$ and $(X_t)_{t\in I}$ is a Gauss-Markov semimartingale defined by $\mu_X(t)=\1_{[0,1/4)} (t) - \1_{[1/4,1/2)} (t)$ and 
\begin{equation}
K_X(s,t) =  \left\{
    \begin{array}{ll}
        \min(s,t)  & \mbox{for } s < 1/2,\\
         \min(s,t) +1  & \mbox{for } s > 1/2 \mbox{ and  } t>1/2,
    \end{array}
\right.
\end{equation}
then $\mathcal T_X= (1/2)$, and the covariance factors are $f_0(x)= x \1_{[0,1/2)} (x) + (x +1)\1_{[1/2,1)} (x)$ and $g_0(x)=1$. Since $f_0$ is not continuous, we must use the continuous decompositions of the covariance factors, i.e., $(x,x+1)$ and $(1,1)$. Hence, 
\begin{equation}
[X-\mu_X]_t  =  \left\{
    \begin{array}{ll}
        t  & \mbox{for } t < 1/2,\\
        t + \Delta(X-\mu_X)_{1/2}  & \mbox{for } t \geqslant 1/2,
    \end{array}
\right.
\end{equation}
where $\Delta(X-\mu_X)_{1/2} \sim \mathcal N(0,1)$, which implies
\begin{equation}
[X]_t  =  \left\{
    \begin{array}{ll}
        t  & \mbox{for } t < 1/4,\\
        t + 4  & \mbox{for } 1/4 \leqslant t < 1/2, \\
        t + 3\Delta(X-\mu_X)_{1/2}+5 & \mbox{for } t \geqslant 1/2.
    \end{array}
\right.
\end{equation}
\end{ex}
\section*{Acknowledgments}
G. Kassis acknowledges the UCL Department of Mathematics for a Teaching Assistantship Award. Furthermore, the author is grateful to Andrea Macrina for useful discussions and feedback.

\end{document}